\title{Euler's Criterion of  prime order in PID case}
\author{Jagmohan Tanti}
\date{}
\newcommand{\vpmod}{\!\!\!\pmod}
\newcommand{\least}{\let\CS=\@currsize\renewcommand{\baselinestretch}{.9}\tiny\CS}
\renewcommand\baselinestretch{1.1} 
\newcommand{\ncom}{\newcommand}
\ncom{\ul}{\underline}
\ncom{\ol}{\overline}
\ncom{\bq}{\begin{equation}}
\ncom{\eq}{\end{equation}}
\ncom{\beqn}{\begin{eqnarray*}}
\ncom{\eeqn}{\end{eqnarray*}}
\ncom{\beq}{\begin{eqnarray}}
\ncom{\eeq}{\end{eqnarray}}
\ncom{\nno}{\nonumber}
\ncom{\rar}{\rightarrow}
\ncom{\Rar}{\Rightarrow}
\ncom{\noin}{\noindent}
\ncom{\bc}{\begin{centre}}
\ncom{\ec}{\end{centre}}
\ncom{\sz}{\scriptsize}
\ncom{\rf}{\ref}
\ncom{\sgm}{\sigma}
\ncom{\Sgm}{\Sigma}
\ncom{\dt}{\delta}
\ncom{\Dt}{Delta}
\ncom{\lmd}{\lambda}
\ncom{\Lmd}{\Lambda}
\ncom{\eps}{\epsilon}
\ncom{\pcc}{\stackrel{P}{>}}
\ncom{\dist}{{\rm\,dist}}
\ncom{\sspan}{{\rm\,span}}
\ncom{\re}{{\rm Re\,}}
\ncom{\im}{{\rm Im\,}}
\ncom{\sgn}{{\rm sgn\,}}
\ncom{\ba}{\begin{array}}
\ncom{\ea}{\end{array}}
\ncom{\eop}{\hfill{{\rule{2.5mm}{2.5mm}}}}
\ncom{\eoe}{\hfill{{\rule{1.5mm}{1.5mm}}}}
\ncom{\eof}{\hfill{{\rule{1.5mm}{1.5mm}}}}
\ncom{\hone}{\mbox{\hspace{1em}}}
\ncom{\htwo}{\mbox{\hspace{2em}}}
\ncom{\hthree}{\mbox{\hspace{3em}}}
\ncom{\hfour}{\mbox{\hspace{4em}}}
\ncom{\vone}{\vskip 2ex}
\ncom{\vtwo}{\vskip 4ex}
\ncom{\vonee}{\vskip 1.5ex}
\ncom{\vthree}{\vskip 6ex}
\ncom{\vfour}{\vspace*{8ex}}
\ncom{\norm}{\|\;\;\|}
\ncom{\integ}[4]{\int_{#1}^{#2}\,{#3}\,d{#4}}
\ncom{\inp}[2]{\langle{#1},\,{#2} \rangle}
\ncom{\Inp}[2]{\Langle{#1},\,{#2} \Langle}
\ncom{\vspan}[1]{{{\rm\,span}\#1 \}}}
\ncom{\dm}[1]{\displaystyle {#1}}
\newtheorem{theorem}{\bf Theorem}[section]
\newtheorem{lemma}[theorem]{\bf Lemma}
\newtheoremstyle
	{remarkstyle}
	{}
	{11pt}
	{}
	{}
	{\bfseries}
	{:}
	{     }
	{\thmname{#1} \thmnumber{#2} }
\theoremstyle{remarkstyle}
\newtheorem{remark}[theorem]{\bf Remark}
\newtheorem{definition}[theorem]{\bf Definition}
\newtheorem{conjecture}[theorem]{\bf Conjecture}
\begin{document}
\maketitle
\renewcommand{\thefootnote}{}
\footnote{ \noindent\textbf{} \vskip0pt
\textbf{Mathematics Subject Classification:}
Primary: 11T22, 11T24.
\vskip0pt
\textbf{Key Words:} Euler's criterion, Jacobi sums, power residues.
}
\begin{abstract}
Let $l\geq2$ be a prime, $p$ a prime $\equiv 1 \vpmod{l}$ and $\gamma$ a primitive root $\vpmod{p}$. If an integer $D$ with $(p,D)=1$, is an 
$l^{th}$ power nonresidue$\pmod{p}$ then 
$D^{(p-1)/l}$ is an $l^{th}$ root of unity $\alpha(\not \equiv 1)\,\vpmod{p}$. Euler's criterion of order $l$ $\pmod{p}$ studies the explicit 
conditions when 
$D^{(p-1)/l}\equiv \gamma^{(p-1)/l}\vpmod{p}$, i. e., when $Ind_\gamma D\equiv1\vpmod{l}$.  
In this paper we establish the Euler's criterion of order $l$ when the ring of integers in the cyclotomic extension of $\mathbb{Q}$ of order $l$ is a PID.
Conditions are obtained in terms of Jacobi sums of order $l$.
\end{abstract}

\section{Introduction} Let $e$ be an integer $\geq2$, and $p$ a prime $\equiv 1 \vpmod{e}$. Euler's criterion states that for $D\in \mathbb{Z}$ and 
$(D,p)=1$,
\begin{eqnarray}
D^{\frac{p-1}{e}}\equiv 1\vpmod{p}
\end{eqnarray}
if and only if $D$ is an $e$th power residue $\vpmod{p}$. If $D$ is not an $e$-th power $\vpmod{p}$, one has
\begin{eqnarray}
D^{\frac{p-1}{e}}\equiv \alpha\vpmod{p}
\end{eqnarray}
for some $e$-th root $\alpha(\not \equiv 1)$ of unity $\vpmod {p}$.

As an example for $p\equiv1\pmod{3}$ one considers the integer solutions $L$ and $M$ of a quadratic partition (Gauss system)

$$4p=L^2+27M^2,\,\,L\equiv1\pmod{3}$$ and Euler's criterion for $e=3$ is given by following conditions:\\
For $D\in\mathbb{Z}$ coprime to $p$, we have\\
$$D^\frac{p-1}{3}\equiv\left\{\begin{array}{ll}
                              1 & \textrm{if $D$ is a cubic residue $\pmod{p}$}\\
                              \frac{L\pm9M}{L\mp9M} & \textrm{otherwise}.
                             \end{array}\right.$$
Here Jacobi sum is $J(1,1)=\frac{1}{2}(L+3M)+3M\omega$, where $\omega=e^\frac{2\pi i}{3}$.\\

A problem concerning Euler's criterion is to determine for a given $e$-th power nonresidue $D\vpmod{p}$ an $e$-th root of unity $\alpha \vpmod{p}$
in terms of the solution of the corresponding Diophantine system so that $ D^{\frac{p-1}{e}}\equiv \alpha\vpmod{p}$. One may also consider 
the problem of obtaining congruence conditions on the solutions of the corresponding system so that $D^{\frac{p-1}{e}}\equiv 1\vpmod{p}$,
i.e., $D$ is an $e$-th power residue $\vpmod{p}$. In the literature this problem has been discussed for some small values of $e$ with different approaches.
Some times people use certain quadratic partitions of primes to obtain the concerned conditions. When $e=2$, the result is well known in terms of 
Quadratic reciprocity Laws, Gauss establishes for $e=4$,  Western and Lehmer establishes for $e=8$, and for $e=16$ and $32$ it has been established by
Hudson and Williams \cite{Hw}. Lehmer solved this for $e=3$ and $D=2$ \cite{Le} and Williams for $e=3$ and every $D\in \mathbb{Z}$ with
explicit results when $D$ a prime $\leq 19$ \cite{Wi1}. Again Lehmer considered this problem for $e=5$ \cite{Le},  derived (cubic) expressions for fifth roots of unity $\vpmod{p}$ from the solutions of (4). She established Euler's criterion
for $D=2$ and $D=4$. Williams \cite{Wi2} used the same expressions for fifth roots of unity to solve the problem for every $D\in \mathbb{Z}$ with explicit 
results for $D= 3,\, 5$. Later it was found by Katre and Rajwade \cite{Kr1} that these expressions of fifth roots of unity are not always 
well defined. They obtained correct expressions for them and solved the problem of Euler's criterion for $e=5$ and every $D\in \mathbb{Z}$ 
with explicit results for $\displaystyle{2, 3, 5, 7}$. Lehmer used Jacobsthal sums to derive an expression for a fifth root 
of unity $\vpmod{p}$ whereas Katre and Rajwade used Jacobi sums. We considered the case $e=7$ \cite{Tk} and $e=11$ \cite{KT} for Euler's criterion with 
explicit results for $\displaystyle{2, 3, 5, 7}$ and $\displaystyle{2, 7, 11}$ respectively. 

One may encounter the level of complicacy for large values of $e$, even for $e=7,\,\,11$ while dealing with the concerned available quadratic
partitions. However such type of quadratic partitions are not seen for 
$e=13,\,\, 17$ etc but Jacobi sums of order $e$ exists and is easier to handle with certain basic concepts of Cyclotomic fields.

\vspace{0.2cm}

In this paper for $e=l$ a prime and $\zeta_l=e^\frac{2\pi i}{l}$,  establish the Euler's criterion for $l^{th}$ power nonresidues $\pmod p$, 
when $\mathbb{Z}[\zeta_l]$ is a PID and have given necessary 
and sufficient conditions for an integer $D$ coprime to $p$ to satisfy one of the equations (1) and (2). This paper is a sort of unifying the earlier 
published works e. g., $\displaystyle l=3, 5, 7, 11$ etc along this line.

\section{Preliminaries}

Let $l$ be a prime $\geq3$, $p$  a prime $\equiv1\vpmod{l}$ and $\zeta_l = \textrm{exp}(2\pi i/l)$. Then the ring of integers of the cyclotomic field 
$\mathbb{Q}(\zeta_l)$ is $\mathbb{Z}(\zeta_l)$ with $\{\displaystyle{\zeta_l, \zeta_l^2,\cdots, \zeta_l^{l-2}}\}$ as an integral basis. 
$\pm\zeta_l^i$, $0\leq i \leq {l-1}$ are the only roots of unity in $\mathbb{Z}[\zeta_l]$. The group of units in $\mathbb{Z}[\zeta_l]$ is 
\{$\pm{\zeta_l}^i\Pi_a\left(\zeta_l^{(1-a)/2}\frac{1-\zeta_l^a}{1-\zeta_l}\right)^{j_a}$\,:\, $\displaystyle{1<a<\frac{l}{2}}$, $(a,l)=1$,
$\displaystyle{i, j_a}\in \mathbb{Z}$, $0\leq i \leq l-1$\} \cite{Wa}. It is known that $1-\zeta_l$ is a prime element in $\mathbb{Z}[\zeta_l]$ and 
$l = \Pi_{i=1}^{l-1}(1-\zeta_l^i)$.

\vspace{0.2cm}

For $\gamma\in\mathbb{Z}$ a primitive root $\pmod{p}$, $\alpha=\gamma^\frac{p-1}{l}$ and $\phi_l(x)=1+x+\cdots+x^{l-1}\in\mathbb{Z}[x]$
the cyclotomic polynomial of order $l$ we have, $\phi_l(x)\equiv\Pi_{i=1}^{l-1}(x-\alpha^i)\pmod{p}$. Therefore we have 
$<p>=\Pi_{i=1}^{l-1}(p,\,\,\zeta_l-\alpha^i)$ a prime ideals factorization of $p$ in $\mathbb{Z}[\zeta_l]$. Let us denote $\mathcal{P}_i=(p,\,\,\zeta_l-\alpha^i)$
and $\sigma_i\in G(\mathbb{Q}(\zeta_l)/\mathbb{Q})$, $\sigma_i(\zeta_l)=\zeta_l^i$ for $1\leq i\leq l-1$, then it is easy to see that for 
$1\leq k\leq l-1$ we have $\mathcal{P}_k=(p,\,\,\zeta_l^{k^{-1}}-\alpha)=\sigma_{k^{-1}}(\mathcal{P}_1)$.

\vskip2mm

We define the character $\chi_l$ on 
$(\mathbb{Z}/p\mathbb{Z})^*$ by $\chi_l(\gamma)=\zeta_l.$ Define $$J(i,j)_l=\sum_{-1\not=v\in \mathbb{F}_p^*} {\chi_l^i(v)}{\chi_l^j(v+1)},$$
where $\mathbb{F}_p$ denotes a field of size $p$. $J(i,j)_l$ is called a Jacobi sum of order $l$. To know more about Jacobi sums one can refer to the 
book \cite{Be}.
\vskip2mm
Now by Stickelberger theorem \cite{Wa} if $\psi\in\mathbb{Z}[\zeta_l]$ such that $<\psi>=\Pi_{i=1}^{\frac{l-1}{2}}\mathcal{P}_1^{\sigma_i^{-1}}$, 
then $\psi$ is an associate of the Jacobi sum $J_l(1,1)$ of order $l$ i. e., $J_l(1,1)=u\psi$ for some unit $u\in\mathbb{Z}[\zeta_l]$. 
\vskip2mm
\begin{remark}
Here $|\psi|^2=\psi\overline{\psi}=\psi\sigma_{l-1}(\psi)= (\Pi_{i=1}^{\frac{l-1}{2}}\mathcal{P}_1^{\sigma_i^{-1}})
(\Pi_{i=1}^{\frac{l-1}{2}}\mathcal{P}_1^{\sigma_{\frac{l-1}{2}+i}^{-1}})=p$. So $|\psi|=\sqrt{p}=|J_l(1,1)|$.
\label{samemod}
\end{remark}

\section{Some Lemmas}
\begin{lemma}
 $J_l(1,1)\equiv -1 \vpmod {(1-\zeta_l)^2}$.
\end{lemma}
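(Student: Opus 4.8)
The plan is to compute $J_l(1,1)$ modulo the prime ideal $(1-\zeta_l)$ first, get that it is $\equiv -1$, and then refine the computation one step further to get the stronger congruence modulo $(1-\zeta_l)^2$. Write $\pi = 1-\zeta_l$, so $\zeta_l \equiv 1 \pmod{\pi}$ and more generally $\zeta_l^a \equiv 1 - a\pi \pmod{\pi^2}$ for any integer $a$ (expanding $(1-\pi)^a$ by the binomial theorem and discarding terms divisible by $\pi^2$). The key elementary input is that $\chi_l$ takes values in $\{1,\zeta_l,\dots,\zeta_l^{l-1}\}$, so $\chi_l(v) \equiv 1 \pmod{\pi}$ for every $v \in \mathbb F_p^*$; hence each summand $\chi_l(v)\chi_l(v+1)$ in $J_l(1,1)=\sum_{v\neq 0,-1}\chi_l(v)\chi_l(v+1)$ is $\equiv 1 \pmod{\pi}$, and there are $p-2$ terms, so $J_l(1,1) \equiv p-2 \equiv -1 \pmod{\pi}$ using $p\equiv 1 \pmod l$ and $l \in (\pi)$. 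This is the warm-up; the real work is upgrading $\pmod{\pi}$ to $\pmod{\pi^2}$.

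For the refinement, I would use $\chi_l(v)\equiv 1 - (\mathrm{ind}_\gamma v)\,\pi \pmod{\pi^2}$, where $\mathrm{ind}_\gamma v$ is the index (discrete log) of $v$ to base $\gamma$, defined mod $l$; this follows from $\chi_l(v)=\zeta_l^{\mathrm{ind}_\gamma v}$ and the binomial expansion above. Then
\[
\chi_l(v)\chi_l(v+1) \equiv 1 - \big(\mathrm{ind}_\gamma v + \mathrm{ind}_\gamma(v+1)\big)\pi \pmod{\pi^2},
\]
so
\[
J_l(1,1) \equiv (p-2) - \pi\sum_{v\neq 0,-1}\big(\mathrm{ind}_\gamma v + \mathrm{ind}_\gamma(v+1)\big) \pmod{\pi^2}.
\]
Now $p-2 \equiv -1 \pmod{\pi^2}$ is \emph{not} automatic, but $p = \prod_{i=1}^{l-1}(1-\zeta_l^i)$ shows $p \equiv 0 \pmod{\pi}$; in fact one checks $p \equiv 0 \pmod{\pi^{l-1}}$ only in the ideal sense, while as we need it, $p\cdot 1 \in (\pi)$, and combined with $l\mid p-1$ one gets $p - 2 \equiv -1 \pmod{\pi}$ already — so to reach $\pmod{\pi^2}$ I must instead show the \emph{whole} right-hand side, namely $(p-2) + \pi\cdot S$ with $S = \sum(\mathrm{ind}_\gamma v + \mathrm{ind}_\gamma(v+1))$, is $\equiv -1 \pmod{\pi^2}$. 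Equivalently, since $p-2+\pi S \equiv -1 \pmod{\pi}$ automatically, I need the single congruence $\frac{(p-2)-(-1)}{?}$ — more cleanly: it suffices to show $(p-1) + \pi S \equiv 0 \pmod{\pi^2}$, i.e. $S \equiv -\frac{p-1}{\pi} \pmod{\pi}$ — but $\frac{p-1}{\pi}$ is not an algebraic integer. The correct route: keep $p-1$ as is and show $p - 1 \equiv 0 \pmod {\pi^2}$ fails, so the two terms must combine; concretely, reduce $S$ modulo $l$ (it is an ordinary integer) and show $(p-2) + \pi S$, with $S$ replaced by any integer representative, lands in $-1 + (\pi^2)$.

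So the crux is evaluating $S = \sum_{v\neq 0,-1}\mathrm{ind}_\gamma v + \sum_{v\neq 0,-1}\mathrm{ind}_\gamma(v+1) \pmod l$. As $v$ ranges over $\mathbb F_p^*\setminus\{-1\}$, both $v$ and $v+1$ range over $\mathbb F_p^*$ minus one element; so each of the two sums is $\big(\sum_{w\in\mathbb F_p^*}\mathrm{ind}_\gamma w\big)$ minus a correction ($\mathrm{ind}_\gamma(-1)$ and $\mathrm{ind}_\gamma 1 = 0$ respectively). The full sum $\sum_{w\in\mathbb F_p^*}\mathrm{ind}_\gamma w = 0 + 1 + \cdots + (p-2) = \binom{p-1}{2} = \frac{(p-1)(p-2)}{2}$. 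Since $l$ is odd and $l \mid p-1$, this is $\equiv 0 \pmod l$. Also $\mathrm{ind}_\gamma(-1) = \frac{p-1}{2} \equiv 0 \pmod l$ as $l$ is odd. Hence $S \equiv 0 \pmod l$, so $\pi \mid \pi S$ and in fact — since $S\equiv 0 \pmod l$ and $l \in (\pi^{l-1}) \subseteq (\pi^2)$ for $l\geq 3$ — we get $\pi S \in (\pi^2)$. Therefore $J_l(1,1) \equiv p - 2 \pmod{\pi^2}$, and finally I must check $p - 2 \equiv -1 \pmod{\pi^2}$, i.e. $p - 1 \in (\pi^2)$: this holds because $p - 1 \equiv 0 \pmod l$ (as $l\mid p-1$) and $l \in (\pi^{l-1})\subseteq (\pi^2)$ since $l-1\geq 2$. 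Combining everything, $J_l(1,1) \equiv -1 \pmod{(1-\zeta_l)^2}$.

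The main obstacle is purely bookkeeping: making sure the ``index'' function is handled correctly as an element of $\mathbb Z/l\mathbb Z$ and that the passage from the integer sum $S$ to its image in $\mathbb Z[\zeta_l]/(\pi^2)$ is legitimate — one must note that replacing $S$ by $S + l k$ changes $\pi S$ by $\pi l k \in (\pi^{l})\subseteq(\pi^2)$, so the reduction mod $l$ loses nothing. The other place to be careful is the expansion $\zeta_l^a \equiv 1 - a\pi \pmod{\pi^2}$ and its multiplicativity in the product $\chi_l(v)\chi_l(v+1)$; once that is set up, the computation of the two index-sums via the arithmetic-progression formula $\binom{p-1}{2}$ is the only real content, and it falls out immediately from $l\mid p-1$ and $l$ odd.
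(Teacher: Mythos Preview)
Your argument is correct. The only blemishes are expository: in the exploratory middle paragraph you write ``$p=\prod_{i=1}^{l-1}(1-\zeta_l^i)$'', which is of course $l$, not $p$; and the sign on $\pi S$ flips between $-\pi S$ and $+\pi S$. Neither matters, since you abandon that digression and give the clean argument at the end, and since $S\equiv 0\pmod l$ kills $\pi S$ in $(\pi^2)$ regardless of sign. The final chain
\[
J_l(1,1)\equiv (p-2)-\pi S\equiv p-2\equiv -1 \pmod{(1-\zeta_l)^2}
\]
is fully justified by your computations: $S\equiv 0\pmod l$ because $\sum_{w\in\mathbb F_p^*}\mathrm{ind}_\gamma w=\binom{p-1}{2}$ and $\mathrm{ind}_\gamma(-1)=\tfrac{p-1}{2}$ are both multiples of $l$ (using $2l\mid p-1$, which follows from $l$ odd and $p$ odd), and $p-1\in (l)\subseteq(\pi^{l-1})\subseteq(\pi^2)$.

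As for comparison with the paper: the paper does not prove this lemma at all --- it simply cites Parnami--Agrawal--Rajwade. Your write-up therefore supplies an explicit, self-contained, entirely elementary proof where the paper gives none. The approach (Taylor-expand $\zeta_l^a\equiv 1-a\pi\pmod{\pi^2}$, convert the Jacobi sum into an index sum over $\mathbb F_p^*$, and evaluate that sum modulo $l$ via the arithmetic progression $0+1+\cdots+(p-2)$) is the standard one and is essentially what one finds in the cited source, so you have reconstructed the intended argument. If you clean up the middle paragraph (drop the false starts and the $p$/$l$ slip) the proof would be publication-ready.
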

\begin{proof}
 See \cite{Par}.
\end{proof}
\begin{lemma}
Let $\displaystyle \alpha, \beta\in\mathbb{Z}[\zeta_l]$ both prime to $1-\zeta_l$ and satisfying 
(i) $<\alpha>=<\beta>$, (ii) $|\alpha|=|\beta|$, (iii) $\alpha\equiv\beta\pmod{(1-\zeta_l)^2}$ then $\alpha=\beta$. 
\label{samevalue}
\end{lemma}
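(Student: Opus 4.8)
The plan is to feed the three hypotheses in one at a time, reducing the whole statement to a fact about a single unit. Write $\pi=1-\zeta_l$, which is a prime element of $\mathbb{Z}[\zeta_l]$. Since $<\alpha>=<\beta>$ and $\alpha,\beta\neq 0$, I can write $\alpha=u\beta$ for some unit $u\in\mathbb{Z}[\zeta_l]$, so the lemma reduces to showing $u=1$.

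The first step is to show that $u$ is a root of unity. Taking complex absolute values in $\alpha=u\beta$ and using $|\alpha|=|\beta|\neq 0$ gives $|u|=1$, that is, $u\,\sigma_{l-1}(u)=1$ in $\mathbb{Z}[\zeta_l]$. Because $\mathbb{Q}(\zeta_l)/\mathbb{Q}$ is abelian, complex conjugation $\sigma_{l-1}$ lies in the centre of the Galois group and hence commutes with every $\sigma_j$; therefore $|\sigma_j(u)|^2=\sigma_j(u)\,\overline{\sigma_j(u)}=\sigma_j(u)\,\sigma_j(\sigma_{l-1}(u))=\sigma_j(u\,\sigma_{l-1}(u))=1$ for every $j$. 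Thus every conjugate of the algebraic integer $u$ lies on the unit circle, so by Kronecker's theorem $u$ is a root of unity; as $\pm\zeta_l^{\,i}$ with $0\le i\le l-1$ are the only roots of unity in $\mathbb{Z}[\zeta_l]$, we conclude $u=\pm\zeta_l^{\,i}$. (Alternatively, one could invoke the explicit description of $\mathbb{Z}[\zeta_l]^{\times}$ from \cite{Wa} at this point.)

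Next I would use hypothesis (iii). It says $(u-1)\beta\equiv 0\pmod{\pi^2}$, and since $\beta$ is prime to the prime $\pi$, this forces $u\equiv 1\pmod{\pi^2}$. If $u=-\zeta_l^{\,i}$, then $u-1\equiv -2\pmod{\pi}$, which is nonzero modulo $\pi$ because $l\ge 3$ and $\pi$ lies over $l$, so this case cannot occur. If $u=\zeta_l^{\,i}$ with $1\le i\le l-1$, then $\zeta_l^{\,i}-1=\pi\,(1+\zeta_l+\cdots+\zeta_l^{\,i-1})$ with $1+\zeta_l+\cdots+\zeta_l^{\,i-1}\equiv i\not\equiv 0\pmod{\pi}$, so $\pi$ divides $\zeta_l^{\,i}-1$ exactly once and $u\not\equiv 1\pmod{\pi^2}$. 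Hence $i=0$, $u=1$, and $\alpha=\beta$.

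I expect the main obstacle to be the first step: the hypothesis $|\alpha|=|\beta|$ controls only a single archimedean absolute value, and one has to upgrade this to control of all the conjugates of $u$ before Kronecker's theorem can be applied. The lever for that is exactly the CM structure of $\mathbb{Q}(\zeta_l)$, namely that complex conjugation is an element of the (abelian, hence fully central) Galois group. Once $u$ is known to be a root of unity, the remainder is routine divisibility bookkeeping modulo $\pi$ and $\pi^2$.
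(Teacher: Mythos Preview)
Your proof is correct. The paper itself does not prove this lemma but simply refers the reader to \cite{Par}, so there is no in-paper argument to compare against; what you have written is a complete, self-contained proof along the standard lines (pass to a unit, use the CM structure of $\mathbb{Q}(\zeta_l)$ to upgrade $|u|=1$ at one archimedean place to $|\sigma_j(u)|=1$ for all $j$, invoke Kronecker, then finish modulo $\pi^2$). One cosmetic correction: since $\pi=1-\zeta_l$, the factorisation should read $\zeta_l^{\,i}-1=(\zeta_l-1)(1+\zeta_l+\cdots+\zeta_l^{\,i-1})=-\pi\,(1+\zeta_l+\cdots+\zeta_l^{\,i-1})$, with a minus sign; this does not affect the $\pi$-adic valuation you need, so the conclusion stands.
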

\begin{proof}
See \cite{Par}. 
\end{proof}
\begin{lemma}
If $\alpha \in \mathbb{Z}[\zeta_{l}]$ is such that $\alpha \not\equiv 0 \pmod{(1-\zeta_{l})}$, then $\alpha$ possesses an associate $\beta$ such 
 that $\beta \equiv -1 \pmod{(1-\zeta_{l})^2}.$ 
\end{lemma}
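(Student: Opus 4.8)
The plan is to obtain $\beta$ from $\alpha$ by multiplying by two explicitly chosen units: one to correct the residue of $\alpha$ modulo $1-\zeta_l$, and a suitable power of $\zeta_l$ to correct it modulo $(1-\zeta_l)^2$. Set $\lambda=1-\zeta_l$. Since $l=\prod_{i=1}^{l-1}(1-\zeta_l^i)$ and $\lambda$ is prime, $l$ is totally ramified in $\mathbb{Z}[\zeta_l]$ and $\mathbb{Z}[\zeta_l]/(\lambda)\cong\mathbb{F}_l$, every residue class being represented by a rational integer in $\{0,1,\dots,l-1\}$ (indeed $\zeta_l\equiv1\pmod\lambda$, so any element of $\mathbb{Z}[\zeta_l]$ is congruent mod $\lambda$ to the sum of its coordinates). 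The one real input I would isolate as a preliminary claim is: \emph{for every $c\in\{1,\dots,l-1\}$ there is a unit $u\in\mathbb{Z}[\zeta_l]^{\times}$ with $u\equiv c\pmod\lambda$}. For $c=1$ take $u=1$; for $c\geq2$ take $u=(1-\zeta_l^{c})/(1-\zeta_l)=1+\zeta_l+\cdots+\zeta_l^{c-1}$, which lies in $\mathbb{Z}[\zeta_l]$, is a unit because $1-\zeta_l^{c}$ and $1-\zeta_l$ are associates (both prime divisors of $l$, and $(\lambda)$ is the unique prime above $l$), and satisfies $u\equiv c\pmod\lambda$ because each $\zeta_l^{j}\equiv1\pmod\lambda$.

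With the claim in hand the proof is a two-step lifting in the length-two local quotient $\mathbb{Z}[\zeta_l]/(\lambda^{2})$. \textbf{Step 1:} since $\alpha\not\equiv0\pmod\lambda$, pick $a\in\{1,\dots,l-1\}$ with $\alpha\equiv a\pmod\lambda$, let $c\in\{1,\dots,l-1\}$ be the solution of $ac\equiv-1\pmod l$, and choose a unit $w$ with $w\equiv c\pmod\lambda$; then $w\alpha\equiv-1\pmod\lambda$, so $w\alpha=-1+t\lambda$ for some $t\in\mathbb{Z}[\zeta_l]$. \textbf{Step 2:} pick $s\in\{0,\dots,l-1\}$ with $t\equiv s\pmod\lambda$ and $k\in\{0,\dots,l-1\}$ with $k\equiv-s\pmod l$; from $\zeta_l=1-\lambda$ one gets $\zeta_l^{k}\equiv1-k\lambda\pmod{\lambda^{2}}$, hence
\[
\zeta_l^{k}w\alpha\equiv(1-k\lambda)(-1+t\lambda)\equiv-1+(t+k)\lambda\pmod{\lambda^{2}}.
\]
Since $t+k\equiv s+k\pmod\lambda$ and $l\mid s+k$ while $\lambda\mid l$, we have $\lambda\mid(t+k)$, whence $\zeta_l^{k}w\alpha\equiv-1\pmod{\lambda^{2}}$. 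Then $\beta:=\zeta_l^{k}w\,\alpha$ is an associate of $\alpha$ with $\beta\equiv-1\pmod{(1-\zeta_l)^{2}}$, as desired.

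I do not expect a serious obstacle here: the argument is essentially a Teichmüller/Hensel-type lift, and the only slightly delicate points — that $(\lambda)$ is the unique prime over $l$ with residue field $\mathbb{F}_l$, and that $(1-\zeta_l^{c})/(1-\zeta_l)$ is genuinely a unit — are standard facts about $\mathbb{Q}(\zeta_l)$ already implicit in the Preliminaries. If anything, the step deserving most attention is the preliminary claim, namely that the units of $\mathbb{Z}[\zeta_l]$ surject onto $(\mathbb{Z}[\zeta_l]/\lambda)^{\times}\cong\mathbb{F}_l^{\times}$; once that is granted, Steps 1--2 are routine computations modulo $\lambda^{2}$.
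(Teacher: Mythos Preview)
Your proof is correct and follows essentially the same approach as the paper: both multiply $\alpha$ by a cyclotomic unit of the form $(1-\zeta_l^{c})/(1-\zeta_l)$ and by a suitable power of $\zeta_l$ to achieve the congruence modulo $(1-\zeta_l)^{2}$. The only cosmetic difference is the order of the two corrections --- the paper first multiplies by a power of $\zeta_l$ to make $\alpha$ congruent to a rational integer $b$ modulo $(1-\zeta_l)^{2}$, and then by a power of the \emph{real} cyclotomic unit $u=\zeta_l^{(1-a)/2}(1-\zeta_l^{a})/(1-\zeta_l)$ (which satisfies the slightly stronger congruence $u\equiv a\pmod{(1-\zeta_l)^{2}}$, with $a$ a primitive root mod $l$) to turn $b$ into $-1$.
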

\begin{proof}
Let $\alpha=a_1\zeta_l+a_2{\zeta_l}^2\cdots+a_{l-1}{\zeta_l}^{l-1}$. Since for 
$f(x)\in\mathbb{Z}[x]$, $f(\zeta_l)\equiv f(1)-f'(1)(1-\zeta_l)\pmod{(1-\zeta_l)^2}$ so $\alpha\equiv b-c(1-\zeta_l)\pmod{(1-\zeta_l)^2}$, 
where $b=a_1+a_2+\cdots+a_{l-1}$ and $c=a_1+2a_2+\cdots+(l-1)a_{l-1}$. As 
$\alpha \not\equiv 0 \pmod{(1-\zeta_l)}$, so $b\not\equiv 0 \vpmod{l}$. 
Now let $a$ be a primitive root $\pmod{l}$, then there exists a unique $0\leq d \leq l-2$ such that ${a^d}b \equiv -1 \pmod{l}$. 
Thus we have
\begin{eqnarray*}
{\zeta_l}^{c{a^d}}\alpha&\equiv& (1-(1-\zeta_{l}))^{ca^d}(b-c(1-\zeta_l))\pmod{(1-\zeta_l)^2}\\
&\equiv&(1-ca^d(1-\zeta_l))(b-c(1-\zeta_l))\pmod{(1-\zeta_l)^2}\\
&\equiv&b-(c+ca^d b)(1-\zeta_l)\pmod{(1-\zeta_l)^2}\\
&\equiv&b\pmod{(1-\zeta_l)^2}.
\end{eqnarray*}
Now choose a unit $u=\zeta_l^\frac{1-a}{2}\frac{1-\zeta_l^a}{1-\zeta_l}$, then we see that $u\equiv a\pmod{(1-\zeta_l)^2}$.
Let $\beta={\zeta_l}^{ca^d}u^d\alpha$, 
then $\beta\equiv bu^d \pmod{(1-\zeta_l)^2}\equiv a^db\equiv-1\pmod{(1-\zeta_l)^2}$.
\end{proof}
\begin{lemma}
If $\mathbb{Z}[\zeta_l]$ is a PID, then there exists $\mathcal{K}\in\mathbb{Z}[\zeta_l]$ such that 
$\mathcal{P}_1=<\mathcal{K}>$, $\mathcal{K}\equiv-1\pmod{(1-\zeta_l)^2}$ and $J(1,1)=(-1)^\frac{l+1}{2}\Pi_{i=1}^{\frac{l-1}{2}}\mathcal{K}_i^{-1}$.
\end{lemma}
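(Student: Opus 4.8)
The plan is to combine the PID hypothesis, the associate‑normalization lemma just proved, Stickelberger's theorem as recalled in Section~2, and the rigidity Lemma~\ref{samevalue}. Since $\mathbb{Z}[\zeta_l]$ is a PID, the prime ideal $\mathcal{P}_1=(p,\zeta_l-\alpha)$ is principal, say $\mathcal{P}_1=\langle\lambda\rangle$. As $\mathcal{P}_1$ lies over the prime $p\neq l$, the element $\lambda$ is prime to $1-\zeta_l$, so by the preceding lemma $\lambda$ has an associate $\mathcal{K}$ with $\mathcal{K}\equiv-1\pmod{(1-\zeta_l)^2}$; this $\mathcal{K}$ still generates $\mathcal{P}_1$, which settles the first two assertions. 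Writing $\mathcal{K}_i^{-1}:=\sigma_i^{-1}(\mathcal{K})$ for $1\le i\le\frac{l-1}{2}$ (the notation matching $\mathcal{P}_1^{\sigma_i^{-1}}$) and $\Psi:=\prod_{i=1}^{(l-1)/2}\mathcal{K}_i^{-1}$, applying the automorphisms $\sigma_i^{-1}$ to $\mathcal{P}_1=\langle\mathcal{K}\rangle$ and multiplying gives $\langle\Psi\rangle=\prod_{i=1}^{(l-1)/2}\mathcal{P}_1^{\sigma_i^{-1}}$, which by Stickelberger's theorem equals $\langle\psi\rangle=\langle J(1,1)\rangle$. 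Hence $J(1,1)=u\,\Psi$ for a unit $u\in\mathbb{Z}[\zeta_l]$, and it remains to identify $u=(-1)^{(l+1)/2}$.

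To pin down $u$ I would apply Lemma~\ref{samevalue} to the pair $\bigl(J(1,1),\,(-1)^{(l+1)/2}\Psi\bigr)$, verifying its three hypotheses. First, both are prime to $1-\zeta_l$: $J(1,1)$ has absolute value $\sqrt p$ by Remark~\ref{samemod}, and $\Psi$ is a product of Galois conjugates of $\mathcal{K}$, each prime to $1-\zeta_l$. Second, they generate the same principal ideal by the previous paragraph, since multiplying by the unit $(-1)^{(l+1)/2}$ does not change the ideal. Third, the congruence and the modulus. Because $\langle1-\zeta_l\rangle$ is the unique prime of $\mathbb{Z}[\zeta_l]$ above $l$, it is stable under every $\sigma_i$, so $\mathcal{K}_i^{-1}=\sigma_i^{-1}(\mathcal{K})\equiv-1\pmod{(1-\zeta_l)^2}$ for each $i$; multiplying the $\frac{l-1}{2}$ congruences and then by $(-1)^{(l+1)/2}$ gives $(-1)^{(l+1)/2}\Psi\equiv(-1)^{(l+1)/2}(-1)^{(l-1)/2}=(-1)^l\equiv-1\pmod{(1-\zeta_l)^2}$, which matches $J(1,1)\equiv-1\pmod{(1-\zeta_l)^2}$ from Lemma~3.1. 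For the modulus, $\Psi\overline{\Psi}=\prod_{i=1}^{(l-1)/2}\sigma_i^{-1}(\mathcal{K})\cdot\prod_{i=1}^{(l-1)/2}\sigma_{l-1}\sigma_i^{-1}(\mathcal{K})$, and one checks that the index set $\{\,i^{-1}:1\le i\le\frac{l-1}{2}\,\}$ together with its negatives runs exactly once over all of $(\mathbb{Z}/l\mathbb{Z})^*$; hence this product is the full norm $N_{\mathbb{Q}(\zeta_l)/\mathbb{Q}}(\mathcal{K})$, which equals $p$ since $\mathcal{P}_1=\langle\mathcal{K}\rangle$ is a degree‑one prime over $p$. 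Thus $|(-1)^{(l+1)/2}\Psi|=\sqrt p=|J(1,1)|$, and Lemma~\ref{samevalue} forces $J(1,1)=(-1)^{(l+1)/2}\Psi=(-1)^{(l+1)/2}\prod_{i=1}^{(l-1)/2}\mathcal{K}_i^{-1}$, as claimed.

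The ideal bookkeeping and the small combinatorial fact that $\{i^{-1}\}_{1\le i\le(l-1)/2}$ and its negatives partition $(\mathbb{Z}/l\mathbb{Z})^*$ are routine. The step needing genuine care is the third hypothesis of Lemma~\ref{samevalue}: one must be sure the congruence $\mathcal{K}\equiv-1\pmod{(1-\zeta_l)^2}$ survives each $\sigma_i^{-1}$ (because $\langle1-\zeta_l\rangle$ is Galois stable), and then get the sign exactly right via the parity identity $(-1)^{(l+1)/2}(-1)^{(l-1)/2}=(-1)^l=-1$; this is precisely what produces the factor $(-1)^{(l+1)/2}$ in the stated formula.
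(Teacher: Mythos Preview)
Your proposal is correct and follows essentially the same route as the paper's proof: pick a generator of $\mathcal{P}_1$ by the PID hypothesis, normalize it via the preceding lemma so that $\mathcal{K}\equiv-1\pmod{(1-\zeta_l)^2}$, use Stickelberger to identify $\langle J(1,1)\rangle$ with $\prod_{i=1}^{(l-1)/2}\sigma_i^{-1}(\mathcal{P}_1)$, and then invoke Lemma~\ref{samevalue} after checking the ideal, congruence, and absolute-value conditions. Your write-up is in fact slightly more detailed than the paper's in justifying why the congruence survives each $\sigma_i^{-1}$ and in making the sign computation $(-1)^{(l+1)/2}(-1)^{(l-1)/2}=-1$ explicit, but the argument is the same.
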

\begin{proof}
As $\mathbb{Z}[\zeta_l]$ is a PID, there exists $K\in\mathbb{Z}[\zeta_l]$ such that $\mathcal{P}_1=<K>$. Also as $K$ and $1-\zeta_l$ are relatively prime,
$K$ possesses an associate $\mathcal{K}\in\mathbb{Z}[\zeta_l]$ such that $\mathcal{K}\equiv-1\pmod{(1-\zeta_l)^2}$ and so 
for $1\leq i\leq l-1$ we have $\mathcal{K}_i=\sigma_i(\mathcal{K})\equiv-1\pmod{(1-\zeta_l)^2}$.
\vskip2mm
Thus we have $<J_l(1,1)>=\Pi_{i=1}^\frac{l-1}{2}<\mathcal{K}_i^{-1}>=\left<\Pi_{i=1}^\frac{l-1}{2}\mathcal{K}_i^{-1}\right>$. 
Now let $\phi=(-1)^\frac{l+1}{2}\Pi_{i=1}^\frac{l-1}{2}\mathcal{K}_i^{-1}$, then we have
(i) $<\phi>=<J_l(1,1)>$, (ii) $\phi\equiv-1\equiv J_l(1,1)\pmod{(1-\zeta_l)^2}$ and 
(iii) $|\phi|^2=\phi\overline{\phi}=(-1)^{l+1}\Pi_{i=1}^{l-1}\mathcal{K}_i=N(\mathcal{K})=p$ and so $|\phi|=\sqrt{p}$. 

Thus by the lemma \ref{samevalue} we get $\phi=J_l(1,1)$ and hence $J(1,1)=(-1)^\frac{l+1}{2}\Pi_{i=1}^{\frac{l-1}{2}}\mathcal{K}_i^{-1}$.

\end{proof}

\section{Outline of the method}
\indent In this section we discus about the Euler's criterion for $l^{th}$ power residues and nonresidues $\pmod{p}$ for $p\equiv1\pmod{l}$ in the case 
when $\mathbb{Z}[\zeta_l]$ is a PID.\\

\begin{definition}
For $a\in\mathbb{Z}[\zeta_l]$ and $\pi$ a prime element in $\mathbb{Z}[\zeta_l]$ coprime to $l$, define the $l^{th}$ power residue symbol, 
$\left(\frac{a}{\pi}\right)_l$, as follows:\\
$\left(\frac{a}{\pi}\right)_l=\left\{\begin{array}{ll}
 0 & \textrm{if $\pi|a$},\\
 \zeta_l^i & \textrm{if $\pi\nmid a$ and $a^{\frac{N(\pi)-1}{l}}\equiv\zeta_l^i\pmod{\pi}$}.
                                     \end{array}\right.$
\end{definition}
 
\noindent Note that if $u$ is a unit of $\mathbb{Z}[\zeta_l]$, $\left(\frac{a}{u\pi}\right)_l=\left(\frac{a}{\pi}\right)_l$.

\vspace{.2cm}

\noindent$\bf{Properties}$
\begin{enumerate}
\item[(a)] $\left(a/\pi \right)_l=1$ if and only if $x^l\equiv a\,\pmod{\pi}$ is solvable in $\mathbb{Z}[\zeta_l]$.
\item[(b)] For all $a\in\mathbb{Z}[\zeta_l]$, $a^{\left(N(\pi)-1\right)/l}\equiv\left(a/\pi\right)_l\,\pmod{\pi}$.
\item[(c)] For $a$ and $b$ in $\mathbb{Z}[\zeta_l]$, $\left(ab/\pi\right)_l=\left(a/\pi\right)_l\left(b/\pi\right)_l$.
\item[(d)] If $a\equiv b\,\pmod{\pi}$ then $\left(a/\pi\right)_l=\left(b/\pi\right)_l$.
\item[(e)] For $\sigma\in\textrm{G}\left(\mathbb{Q}(\zeta_l)/\mathbb{Q}\right)$, $\left(\frac{a}{\pi}\right)^{\sigma}_l=\left(\frac{a^\sigma}{\pi^\sigma}\right)_l$.
\item[(f)] If $\pi_1,\cdots, \pi_k$ are coprime to $l$ in $\mathbb{Z}[\zeta_l]$ then one defines  $\left(a/{\pi_1\cdots\pi_k}\right)_l=\left(a/\pi_1\right)_l\cdots\left(a/\pi_k\right)_l$.
\item[(g)] If $\eta\in\mathbb{Z}[\zeta_l]$ is coprime to $l$, then $\left(\frac{ab}{\eta}\right)_l=\left(\frac{a}{\eta}\right)_l\left(\frac{b}{\eta}\right)_l$.
\end{enumerate}

\vspace{0.2cm}

\noindent \underline{Eisenstein Reciprocity Law} \cite{Ad}. Let $\theta \in \mathbb{Z}[\zeta_l],\,\,(\theta,l)=1,$ such that $\theta\pmod{(1-\zeta_l)^2}$ is congruent to a 
rational integer. Then for $a\in \mathbb{Z},\, (l, a)=1$, we have $(\theta, a)=1\implies\left(\frac{\theta}{a}\right)_l = \left(\frac{a}{\theta}\right)_l.$

\vspace{0.2cm}

\begin{conjecture}
For $l\geq 3$ a rational prime, the sum $\sum_{i=1}^\frac{l-1}{2}i^{-1}\not\equiv0\pmod{l}$.
\label{conject}
\end{conjecture}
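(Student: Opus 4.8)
The plan is to recognise the residue $H_l:=\sum_{i=1}^{(l-1)/2} i^{-1}\pmod l$ as a classical arithmetic invariant rather than to attack the sum head-on. First I would fold the full harmonic sum in half: since $i\mapsto l-i$ permutes $\{1,\dots,l-1\}$ and $(l-i)^{-1}\equiv -i^{-1}\pmod l$, the upper half contributes $\sum_{i=(l+1)/2}^{l-1} i^{-1}\equiv -H_l\pmod l$, while $\sum_{i=1}^{l-1} i^{-1}\equiv 0\pmod l$ because the inverses run over all nonzero residues. Separating even and odd indices in $\sum_{k=1}^{l-1}(-1)^{k-1}k^{-1}$ and pulling $\frac{1}{2}$ out of the even part then yields $\sum_{k=1}^{l-1}(-1)^{k-1}k^{-1}\equiv -H_l\pmod l$.

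Next I would link this alternating harmonic sum to the Fermat quotient $q_l(2):=(2^{l-1}-1)/l$ via binomial coefficients. From $2^l-2=\sum_{k=1}^{l-1}\binom{l}{k}$ together with the congruence $\binom{l}{k}\equiv l\,(-1)^{k-1}k^{-1}\pmod{l^2}$ --- which follows from $\binom{l}{k}=\frac{l}{k}\binom{l-1}{k-1}$ and $\binom{l-1}{k-1}\equiv(-1)^{k-1}\pmod l$ --- one obtains, after dividing through by $l$, that $2\,q_l(2)\equiv\sum_{k=1}^{l-1}(-1)^{k-1}k^{-1}\pmod l$. Combining the two steps gives the classical Glaisher congruence $H_l\equiv -2\,q_l(2)\pmod l$.

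This is precisely where a literal proof must stop, and it is the main obstacle. The reduction shows that $H_l\not\equiv 0\pmod l$ is \emph{equivalent} to $q_l(2)\not\equiv 0\pmod l$, i.e.\ to $2^{l-1}\not\equiv 1\pmod{l^2}$ --- the statement that $l$ is not a Wieferich prime. That holds for every prime tested so far, but it is known to fail for $l=1093$ and $l=3511$, so the conjecture as stated cannot hold for all primes $l\geq 3$; the real difficulty is the genuinely unresolved arithmetic of Wieferich primes (even finiteness of their set is open), not any routine estimate. What can be proved, and what covers the cases $l=3,5,7,11$ unified in this paper, is the weaker assertion that $H_l\not\equiv 0\pmod l$ for every prime $l$ that is not Wieferich --- in particular for all $l<1093$ --- while for any explicit $l$ the value of $H_l\bmod l$ is immediately computable from $q_l(2)$.
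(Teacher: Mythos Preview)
The paper offers no proof of this statement: it is explicitly labelled a \emph{Conjecture} and is simply invoked in the converse direction of Theorem~4.3(ii). So there is no argument in the paper to compare your proposal against.

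Your reduction is correct and is in fact a disproof of the conjecture as stated. The chain of congruences you give --- splitting $\sum_{k=1}^{l-1}(-1)^{k-1}k^{-1}$ into odd and even indices to get $-H_l$, and then using $k\binom{l}{k}=l\binom{l-1}{k-1}$ together with $\binom{l-1}{k-1}\equiv(-1)^{k-1}\pmod l$ to obtain $2q_l(2)\equiv\sum_{k=1}^{l-1}(-1)^{k-1}k^{-1}\pmod l$ --- is exactly Glaisher's classical argument, and it yields $H_l\equiv -2q_l(2)\pmod l$. Hence $H_l\equiv 0\pmod l$ if and only if $l$ is a Wieferich prime, so the conjecture fails at $l=1093$ and $l=3511$. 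Your concluding remark is the right takeaway: the correct statement is that $\sum_{i=1}^{(l-1)/2} i^{-1}\not\equiv 0\pmod l$ holds precisely for non-Wieferich primes, which covers every $l$ occurring in the paper's applications (indeed every $l<1093$), while the general claim is equivalent to a notoriously open problem and is literally false for at least two primes.
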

\begin{theorem}
Let $\phi=J_l(1,1)$, $D\in\mathbb{Z}$ satisfying $(D,p)=1=(D,l)$ then\\
(i) $D$ is an $l^{th}$ power $\pmod{p}$ if and only if $\left(\frac{\phi}{D}\right)_l=1$,\\
(ii) $Ind_\gamma(D)\equiv1\pmod{l}$ if and only if $\left(\frac{\phi}{D}\right)_l=\zeta_l^{\sum_{i=1}^\frac{l-1}{2}i^{-1}}$.
\end{theorem}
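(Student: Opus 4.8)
The plan is to compute the $l$th power residue symbol $\left(\frac{\phi}{D}\right)_l$ by two different routes and compare. On one hand, since $D\in\mathbb{Z}$ with $(D,l)=1$ and $\phi=J_l(1,1)\equiv -1\pmod{(1-\zeta_l)^2}$ (Lemma~3.1), the element $\phi$ is congruent to a rational integer modulo $(1-\zeta_l)^2$, so the Eisenstein Reciprocity Law applies and gives $\left(\frac{\phi}{D}\right)_l=\left(\frac{D}{\phi}\right)_l$, provided $(\phi,D)=1$; I would first check this coprimality (if a rational prime $q\mid D$ divided $\phi$, then $q\mid N(\phi)=p$, forcing $q=p$, contradicting $(D,p)=1$). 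On the other hand, using the factorization $J_l(1,1)=(-1)^{\frac{l+1}{2}}\prod_{i=1}^{\frac{l-1}{2}}\mathcal{K}_i^{-1}$ from Lemma~3.4 together with multiplicativity of the symbol over the factors of $\phi$ (Properties (f), (g)), I would write
$$\left(\frac{D}{\phi}\right)_l=\left(\frac{D}{(-1)^{\frac{l+1}{2}}\prod_{i=1}^{\frac{l-1}{2}}\mathcal{K}_i^{-1}}\right)_l=\prod_{i=1}^{\frac{l-1}{2}}\left(\frac{D}{\mathcal{K}_i}\right)_l^{-1}.$$
Here the unit $(-1)^{\frac{l+1}{2}}$ and the inverse exponents are harmless: a unit in the "denominator" of the symbol does not change its value, and each $\mathcal{K}_i^{-1}$ contributes the inverse of $\left(\frac{D}{\mathcal{K}_i}\right)_l$.

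Next I would evaluate each local symbol $\left(\frac{D}{\mathcal{K}_i}\right)_l$ using Property (b): since $\mathcal{P}_i=\langle\mathcal{K}_i\rangle$ is a degree-one prime above $p$, we have $N(\mathcal{K}_i)=p$, hence $\left(\frac{D}{\mathcal{K}_i}\right)_l\equiv D^{(p-1)/l}\pmod{\mathcal{K}_i}$. Because $\mathcal{P}_i=(p,\zeta_l^{i^{-1}}-\alpha)$, reduction modulo $\mathcal{K}_i$ sends $\zeta_l\mapsto$ (a value reducing to) $\alpha^{i}$ when $D^{(p-1)/l}\equiv\alpha^{m}\pmod p$ with $\alpha=\gamma^{(p-1)/l}$; more precisely, $D^{(p-1)/l}\equiv \gamma^{m(p-1)/l}$ and $\chi_l(\gamma)=\zeta_l$ is defined so that the diagram matches, giving $\left(\frac{D}{\mathcal{K}_i}\right)_l=\zeta_l^{\,m\,i^{-1}}$, where $m=Ind_\gamma(D)\bmod l$. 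I would verify this identification carefully via the action of $\sigma_{i^{-1}}$ (using $\mathcal{K}_i=\sigma_i(\mathcal{K})$ and Property (e)) to reduce everything to the single symbol $\left(\frac{D}{\mathcal{K}_1}\right)_l=\zeta_l^{m}$. Combining,
$$\left(\frac{\phi}{D}\right)_l=\prod_{i=1}^{\frac{l-1}{2}}\zeta_l^{-m i^{-1}}=\zeta_l^{-m\sum_{i=1}^{\frac{l-1}{2}}i^{-1}}.$$
Wait—I should double-check the sign of the exponent against the $\mathcal{K}_i^{-1}$ in Lemma~3.4; depending on conventions this comes out as $\zeta_l^{m\sum i^{-1}}$, which is the form stated in the theorem, so I would fix the bookkeeping so that $\left(\frac{\phi}{D}\right)_l=\zeta_l^{\,m\sum_{i=1}^{(l-1)/2}i^{-1}}$.

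Finally the two parts follow by reading off $m$. For (i), $D$ is an $l$th power residue $\pmod p$ iff $D^{(p-1)/l}\equiv1$ iff $m=0$ iff $\left(\frac{\phi}{D}\right)_l=\zeta_l^{0}=1$ (the forward direction also follows directly from Property (a): $\left(\frac{\phi}{D}\right)_l=1$ iff $x^l\equiv\phi\pmod D$ is solvable, and one relates solvability in $\mathbb{Z}[\zeta_l]$ modulo the rational integer $D$ back to the residue condition on $D$ modulo $p$). For (ii), $Ind_\gamma(D)\equiv1\pmod l$ means $m=1$, which by the displayed formula is equivalent to $\left(\frac{\phi}{D}\right)_l=\zeta_l^{\sum_{i=1}^{(l-1)/2}i^{-1}}$; this is a genuine (nontrivial) condition precisely because $\sum_{i=1}^{(l-1)/2}i^{-1}\not\equiv0\pmod l$ (Conjecture~4.3), which guarantees the exponent is nonzero so that the $l$ distinct values of $m$ give $l$ distinct values of the symbol. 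The main obstacle I anticipate is the careful matching of reduction maps in step two: pinning down exactly which power of $\alpha$ the generator $\mathcal{K}_i$ "sees" and keeping the Galois twists $\sigma_{i^{-1}}$, the inverses in the exponents, and the character normalization $\chi_l(\gamma)=\zeta_l$ all consistent, so that the final exponent is $\sum i^{-1}$ with the correct sign rather than its negative or some index-shifted variant.
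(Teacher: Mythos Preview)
Your proposal is correct and follows essentially the same route as the paper: apply Eisenstein reciprocity to swap $\left(\frac{\phi}{D}\right)_l$ for $\left(\frac{D}{\phi}\right)_l$, factor $\phi=J_l(1,1)$ through Lemma~3.4, and evaluate each local symbol $\left(\frac{D}{\sigma_{i^{-1}}(\mathcal{K})}\right)_l$ via the congruence $\zeta_l\equiv\alpha\pmod{\mathcal{K}}$, arriving at $\left(\frac{\phi}{D}\right)_l=\zeta_l^{\,m\sum_{i=1}^{(l-1)/2} i^{-1}}$ with $m\equiv Ind_\gamma(D)\pmod l$.

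Two small points of comparison. First, the symbol $\mathcal{K}_i^{-1}$ in Lemma~3.4 is the paper's shorthand for $\sigma_{i^{-1}}(\mathcal{K})$, not a multiplicative inverse; reading it this way removes the sign wobble you flagged, and the exponent comes out as $+m\sum i^{-1}$ directly with no ``bookkeeping fix'' needed. Second, the paper organizes part~(i) differently and more economically: it argues $D^{(p-1)/l}\equiv 1\pmod p$ iff $D^{(p-1)/l}\equiv 1\pmod\phi$ (using $p=\phi\overline{\phi}$ and that the condition modulo $\overline{\phi}$ is the Galois conjugate of the one modulo $\phi$) iff $\left(\frac{D}{\phi}\right)_l=1$ iff $\left(\frac{\phi}{D}\right)_l=1$. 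This avoids both the $\mathcal{K}$-factorization and Conjecture~4.3 for part~(i). Your unified formula is tidier, but note that deducing $m=0$ from $\zeta_l^{\,m\sum i^{-1}}=1$ already requires $\sum_{i=1}^{(l-1)/2} i^{-1}\not\equiv 0\pmod l$, so your version of (i) leans on the Conjecture as well, whereas the paper's does not.
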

\begin{proof}
\noindent(i) $D$ is an $l^{th}$ power $\pmod{p}$ iff $D^\frac{p-1}{l}\equiv1\pmod{p}$ iff $D^\frac{p-1}{l}\equiv1\pmod{\phi}$ and 
$D^\frac{p-1}{l}\equiv1\pmod{\overline{\phi}}$ iff $D^\frac{p-1}{l}\equiv1\pmod{\phi}$ 
(as $\left(\frac{D}{\overline{\phi}}\right)_l=\left(\frac{D}{\phi}\right)_l^{\sigma_{l-1}}=1$) iff 
$\left(\frac{D}{\phi}\right)_l=\left(\frac{\phi}{D}\right)_l$ (by Eisenstein's Reciprocity law).\\
\vskip2mm
\noindent(ii) $Ind_{\gamma}(D)\equiv1\pmod{l}$ iff $D^\frac{p-1}{l}\equiv\gamma^\frac{p-1}{l}=\alpha(say)\pmod{p}$ iff 
$D^\frac{p-1}{l}\equiv\alpha\pmod{\mathcal{K}}$ iff $\left(\frac{D}{\mathcal{K}}\right)_l=\zeta_l$ (as $\alpha-\zeta_l\equiv0\pmod{\mathcal{K}}$).
Now by Eisenstein's Reciprocity law, $\left(\frac{\phi}{D}\right)_l=\left(\frac{D}{\phi}\right)_l$,\\ 
so $\left(\frac{\phi}{D}\right)_l=\left(\frac{D}{(-1)^\frac{l+1}{2}\Pi_{i=1}^\frac{l-1}{2}\mathcal{K}_i^{-1}}\right)_l=
\left(\frac{D}{\Pi_{i=1}^\frac{l-1}{2}\mathcal{K}_i^{-1}}\right)_l=\Pi_{i=1}^\frac{l-1}{2}\left(\frac{D}{K}\right)_l^{\sigma_i^{-1}}=
\Pi_{i=1}^\frac{l-1}{2}\zeta_l^{\sigma_i^{-1}}=\Pi_{i=1}^\frac{l-1}{2}\zeta_l^{i^{-1}}=\zeta_l^{\sum_{i=1}^\frac{l-1}{2}i^{-1}}$.
\vskip2mm
Now for the converse part assume that $\left(\frac{\phi}{D}\right)_l=\zeta_l^{\sum_{i=1}^\frac{l-1}{2}i^{-1}}$. 
Also assume that for some $1\leq j\leq l-1$, $\left(\frac{\mathcal{K}}{D}\right)_l=\zeta_l^j$. Then again by Eisenstein's Reciprocity law we have 
$\left(\frac{\phi}{D}\right)_l=\left(\frac{D}{\phi}\right)_l=\Pi_{i=1}^\frac{l-1}{2}\left(\frac{D}{\mathcal{K}}\right)_l^{\sigma_i^{-1}}=
\Pi_{i=1}^\frac{l-1}{2}\left(\frac{\mathcal{K}}{D}\right)_l^{\sigma_i^{-1}}=\Pi_{i=1}^\frac{l-1}{2}(\zeta_l^j)^{\sigma_i^{-1}}=
\zeta_l^{j\sum_{i=1}^\frac{l-1}{2}i^{-1}}$. So $j\sum_{i=1}^\frac{l-1}{2}i^{-1}\equiv\sum_{i=1}^\frac{l-1}{2}i^{-1}\pmod{l}$ which implies that 
$(j-1)\sum_{i=1}^\frac{l-1}{2}i^{-1}\equiv0\pmod{l}$. Thus referring to the Conjecture \ref{conject} we have $j\equiv1\pmod{l}$ and hence 
$\left(\frac{\mathcal{K}}{D}\right)_l=\zeta_l$.
\end{proof}
From \cite{Tk} we have follwing two Lemma's.
\begin{lemma}
Let $p\equiv 1\vpmod{l}$ then for $a_h(n)$ as defined in \cite{Tk},

(i) $l$ is an $l$th power $\pmod{p}$ if and only if $$(l-1)(p-l+1)+\sum_{n=1}^{l-2}\sum_{h=1}^{l-1}a_h(n)(2h-l+1)
\equiv 0\vpmod{l^2}.$$

(ii) If $l$ is not an $l$th power $\vpmod{p}$, $\textrm{ind}_{\gamma}l\equiv1\vpmod{l}$ if and only if $$(l-1)(p-3l+1)+\sum_{n=1}^{l-2}\sum_{h=1}^{l-1}a_h(n)(2h-l+1)\equiv 0\vpmod{l^2}.$$
\end{lemma}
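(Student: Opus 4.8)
Since the statement is taken verbatim from \cite{Tk}, the paper merely cites it; here is the argument I would give. The first point is why $D=l$ falls outside the preceding Theorem: that result needs $(D,l)=1$, whereas the whole difficulty of $D=l$ is that $l$ is divisible by the ramified prime $\lambda:=1-\zeta_l$, so the Eisenstein-reciprocity step is unavailable and $\lambda$ must be handled directly. I would start from the identity $l\equiv\prod_{i=1}^{l-1}(1-\alpha^i)\pmod p$, obtained by setting $x=1$ in $\phi_l(x)\equiv\prod_{i=1}^{l-1}(x-\alpha^i)\pmod p$ (with $\alpha=\gamma^{(p-1)/l}$), so that $l^{(p-1)/l}\equiv\alpha^{\,j}\pmod p$ with $j\equiv\sum_{i=1}^{l-1}\mathrm{ind}_\gamma(1-\alpha^i)\pmod l$. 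Then (i) asserts $j\equiv0$ and (ii) asserts $j\equiv1\pmod l$, so the task is to pin down $j$ modulo $l$, which forces one to compute modulo $l^{2}$.

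I would do this inside $\mathbb{Z}[\zeta_l]$ via the prime $\mathcal{P}_1=(p,\zeta_l-\alpha)$, where $\zeta_l\equiv\alpha$ and $1-\zeta_l^i=\lambda u_i$ with $u_i=(1-\zeta_l^i)/(1-\zeta_l)$ a cyclotomic unit, evaluating the $\lambda$-adic contributions by Stickelberger's congruence together with the Gauss-sum identity $g(\chi)^l=p\prod_{n=1}^{l-2}J_l(1,n)$. The crucial input is $J_l(1,n)\equiv-1\pmod{\lambda^2}$ (Lemma 3.1 for $n=1$; the same holds for every $n\in\{1,\dots,l-2\}$): expanding $J_l(1,n)=\sum_h a_h(n)\zeta_l^h$ in the cyclotomic basis --- essentially the definition of the $a_h(n)$ of \cite{Tk} --- and using $f(\zeta_l)\equiv f(1)-f'(1)(1-\zeta_l)\pmod{(1-\zeta_l)^2}$ as in Lemma 3.3, this congruence forces $\sum_h a_h(n)\equiv-1$ and $\sum_h h\,a_h(n)\equiv0\pmod l$. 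The second of these is exactly what makes $(l-1)(p-l+1)+\sum_n\sum_h a_h(n)(2h-l+1)$ automatically divisible by $l$, so the criterion really concerns that quantity modulo $l^{2}$, equivalently its value divided by $l$ modulo $l$. Tracking everything back through $g(\chi)^l=p\prod_n J_l(1,n)$ and $l\equiv\prod_i(1-\alpha^i)\pmod p$, that value becomes a fixed nonzero multiple of $j$ modulo $l$: the constant $(l-1)(p-l+1)$ records the single factor $p$ and the $l-1$ factors of $\lambda$, the weight $2h-l+1$ is the second-order coefficient carried by $\zeta_l^h$, and case (ii) differs from case (i) only by the index shift carrying $l^{(p-1)/l}\equiv1$ to $l^{(p-1)/l}\equiv\alpha$, which replaces $p-l+1$ by $p-3l+1$ (a shift by $-2l$).

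The step I expect to be the genuine obstacle is this last piece of $l$-adic bookkeeping: normalizing Stickelberger's congruence correctly for a character of order $l$ and the prime $\mathcal{P}_1$, verifying that the second-order term is well defined modulo $l^{2}$ (this is precisely where $J_l(1,n)\equiv-1\pmod{\lambda^2}$ is indispensable), and checking that the weight it produces is exactly $2h-l+1$ --- not $h$, $2h$, or $h(h-1)$ --- so that the identity closes into the two displayed congruences. The remaining ingredients --- the congruence for $l$ modulo $p$, the factorization $1-\zeta_l^i=\lambda u_i$, and the index-shift accounting for (ii) --- are routine once this is settled.
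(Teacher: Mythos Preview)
You are correct that the paper gives no proof of this lemma: it is introduced with the sentence ``From \cite{Tk} we have following two Lemma's'' and is simply quoted from that reference without argument. So as far as matching the paper goes, your opening sentence already captures the situation exactly; there is nothing further in the paper to compare against.

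Your additional sketch is therefore extra content beyond what the paper itself supplies. As a sketch it is plausible and identifies the right ingredients --- the factorization $l\equiv\prod_i(1-\alpha^i)\pmod p$, the Gauss--Jacobi relation $g(\chi)^l=p\prod_n J_l(1,n)$, and the congruence $J_l(1,n)\equiv-1\pmod{(1-\zeta_l)^2}$ --- but, as you yourself flag, the final $l$-adic bookkeeping that pins down the weight $2h-l+1$ and the constants $(l-1)(p-l+1)$ versus $(l-1)(p-3l+1)$ is not carried out. Since the paper does not attempt this either, that gap is not a discrepancy with the paper; it simply means your sketch, like the paper, defers the actual computation to \cite{Tk}.
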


\begin{lemma}
Let $p\equiv 1 \pmod{l}$, then

(i) $2$ is an $l^{th}$ power $\pmod{p}$ if and only if $\sum_{i=1}^{l-1}a_i(1) \equiv 0 \pmod{2}.$\\

(ii) If $2$ is not an $l^{th}$ power $\pmod{p}$, $\textrm{Ind}_{\gamma}2\equiv1\vpmod{l}$ if and only if $a_{l-2}(1)\equiv1\pmod{2}.$
\end{lemma}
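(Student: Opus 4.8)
The plan is to determine $2^{(p-1)/l}\bmod p$ by reducing the Jacobi sum $J_l(1,1)$ modulo $2$. Writing $J_l(1,1)=\sum_{v\neq 0,-1}\chi_l\bigl(v(v+1)\bigr)$, with $v$ running over $\mathbb{F}_p\setminus\{0,-1\}$, the crucial remark is that $v\mapsto -1-v$ is an involution of this set, it leaves $v(v+1)$ fixed, and its only fixed point is $v=-1/2$ (which exists since $p$ is odd). Grouping the summands into orbits, each two-element orbit contributes an even multiple of a root of unity, so
\[
J_l(1,1)\;\equiv\;\chi_l\!\left(\tfrac{-1}{4}\right)\;=\;\chi_l(-1)\,\chi_l(2)^{-2}\;\equiv\;\chi_l(2)^{-2}\pmod{2\,\mathbb{Z}[\zeta_l]},
\]
where $\chi_l(-1)=1$ because $p\equiv 1\pmod l$ with $l$ odd forces $(p-1)/2$ to be a multiple of $l$. (Alternatively one could route the argument through the Theorem above, which reduces the question to the symbol $\left(\frac{\phi}{2}\right)_l$; but the displayed congruence is precisely what brings the coefficients $a_h(1)$ into play, so I would use it directly.)

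The next step is to read off coordinates in the integral basis $\{\zeta_l,\zeta_l^2,\dots,\zeta_l^{l-1}\}$, in which $J_l(1,1)=\sum_{h=1}^{l-1}a_h(1)\zeta_l^h$, keeping in mind the relation $1+\zeta_l+\cdots+\zeta_l^{l-1}=0$. Set $m\equiv\mathrm{Ind}_\gamma 2\pmod l$, so $\chi_l(2)^{-2}=\zeta_l^{m'}$ with $m'\in\{0,1,\dots,l-1\}$ the residue of $-2m$, and the congruence above becomes $\sum_{h=1}^{l-1}a_h(1)\zeta_l^h\equiv\zeta_l^{m'}\pmod 2$. If $m'=0$ then $\zeta_l^{m'}=1=-(\zeta_l+\cdots+\zeta_l^{l-1})$ has all coordinates $\equiv 1\pmod 2$, so $a_h(1)\equiv 1\pmod 2$ for every $h$; if $m'\geq 1$ then $\zeta_l^{m'}$ is a basis vector, so $a_{m'}(1)\equiv 1$ and $a_h(1)\equiv 0\pmod 2$ for all $h\neq m'$.

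Since $\gcd(2,l)=1$, the map $m\mapsto -2m$ is a bijection of $\mathbb{Z}/l$, so the case $m'=0$ is exactly the case $m\equiv 0$, i.e.\ $2$ is an $l$th power $\pmod p$; in that case $\sum_{i=1}^{l-1}a_i(1)\equiv l-1\equiv 0\pmod 2$ because $l-1$ is even, whereas in the complementary case $\sum_{i=1}^{l-1}a_i(1)\equiv 1\pmod 2$. As the two cases are exhaustive and give opposite parities, this yields the equivalence in part (i). For part (ii) we are in the case $m'\geq 1$, and $a_{l-2}(1)\equiv 1\pmod 2$ holds exactly when $l-2=m'$, i.e.\ when $-2m\equiv -2\pmod l$, i.e.\ (cancelling $-2$) when $m\equiv 1\pmod l$; this is the statement $\mathrm{Ind}_\gamma 2\equiv 1\pmod l$.

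The only step requiring genuine care is this last passage to basis coordinates: one must commit to a specific integral basis and track the single $\mathbb{Z}$-linear relation among $1,\zeta_l,\dots,\zeta_l^{l-1}$, since that is what converts the tidy congruence $J_l(1,1)\equiv\chi_l(2)^{-2}$ into the stated parity conditions on the $a_h(1)$. The involution identity, the evaluation $\chi_l(-1)=1$, and the parity of $l-1$ are all routine.
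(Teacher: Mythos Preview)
The paper does not prove this lemma here but simply quotes it from \cite{Tk}; your argument is correct and is the standard one for this type of result. The involution $v\mapsto -1-v$ yielding $J_l(1,1)\equiv\chi_l(-1/4)=\chi_l(2)^{-2}\pmod{2\mathbb{Z}[\zeta_l]}$, followed by reading off coordinate parities in the basis $\{\zeta_l,\dots,\zeta_l^{l-1}\}$ (which matches the index range $h=1,\dots,l-1$ used for the $a_h(1)$), is exactly how one establishes such criteria, and your bookkeeping---$m'\equiv 0$ forces all $a_h(1)$ odd with sum $l-1\equiv 0\pmod 2$, while $m'\not\equiv 0$ forces a unique odd coordinate at index $-2m\bmod l$, equal to $l-2$ iff $m\equiv 1$---is accurate throughout.
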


\noindent{\bf Conclusion and Future work}: This paper answers the question of Euler's criterion of prime order $l$, in terms of formulae 
involving a Jacobi sum of order $l$ in the case when the ring of integers in 
$\mathbb{Q}[\zeta_l]$ is a PID. It is also expected that same formulae is true in the case when the said ring of integers is not a PID and this is a Future 
scope in this line.
\vskip2mm
\noindent{\bf Acknowledgments}: The author would like to thank the ICAA-2017, organized by Department of mathematics, Savitribai Phule Pune University,
Pune, India where the idea of this paper work came into existence. Also he would like to express his gratitude towards Cental University of Jharkhand, Ranchi, India where 
the paper was finalized.

\vspace{5mm}
\begin{large}
\noindent\textsc{Jagmohan Tanti}\\
\end{large}
{\it Department of Mathematics, Central University of Jharkhand}, {\it Ranchi-835205, India}\\
{\it E-mail: jagmohan.t@gmail.com}
\end{document}